\newtheorem{theorem}{Theorem}[section]
\newtheorem{lemma}[theorem]{Lemma}
\theoremstyle{definition}
\newtheorem{remark}[theorem]{Remark}
\theoremstyle{remark}
\newcommand{\bs}{\begin{split}}
\newcommand{\es}{\begin{split}}
\newcommand{\be}{\begin{equation}}
\newcommand{\ee}{\end{equation}}
\numberwithin{equation}{section}
\begin{document}

\title[Uniqueness of the complex projective spaces]
{Some remarks on the uniqueness of the complex projective spaces}

\author{Ping Li}
\address{Department of Mathematics, Tongji University, Shanghai 200092, China}
\email{pingli@tongji.edu.cn\\
pinglimath@gmail.com}
\thanks{The author was partially supported by the National
Natural Science Foundation of China (Grant No. 11471247) and the
Fundamental Research Funds for the Central Universities.}

 \subjclass[2010]{32Q15, 53C55, 32Q55.}


\keywords{uniqueness, complex projective space, Pontrjagin class,
Petrie's conjecture, Yau's Chern number inequality.}

\begin{abstract}
We first notice in this article that if a compact K\"{a}hler
manifold has the same integral cohomology ring and Pontrjagin
classes as the complex projective space $\mathbb{C}P^n$, then it is
biholomorphic to $\mathbb{C}P^n$ provided $n$ is odd. The same holds
for even $n$ if we further assume that $M$ is simply-connected. This
technically refines a classical result of Hirzebruch-Kodaira and
Yau. This observation, together with a result of Dessai and Wilking,
enables us to characterize all $\mathbb{C}P^n$ in terms of homotopy
type under mild symmetry. When $n=4$, we can drop the requirement on
Pontrjagin classes by showing that a simply-connected compact
K\"{a}hler manifold having the same integral cohomology ring as
$\mathbb{C}P^4$ is biholomorphic to $\mathbb{C}P^4$, which improves
on results of Fujita and Libgober-Wood.
\end{abstract}

\maketitle
\section{Introduction}
It is an important problem to characterize the standard complex
projective spaces $\mathbb{C}P^n$ as compact complex manifolds via
geometrical and/or topological information as little as possible.
Hirzebruch and Kodaira showed in \cite{HK} that if a K\"{a}her
manifold $M$ is diffeomorphic to $\mathbb{C}P^n$, then
\begin{enumerate}
\item
$M$ is biholomorphic to $\mathbb{C}P^n$ provided that $n$ is odd;

\item
$M$ is biholomorphic to $\mathbb{C}P^n$ provided that $n$ is even
and the first class $c_1(M)\neq-(n+1)g$, where $g$ is the positive
generator of $H^2(M;\mathbb{Z})$.
\end{enumerate}

The fact that the total Pontrjagin class of $M$ has the standard
form $(1+g^2)^{n+1}$ as that of $\mathbb{C}P^n$ plays a key role in
their proof. Later Yau noticed that (\cite{yau}) the extra
assumption $c_1(M)\neq-(n+1)g$ in the case of $n$ being even can be
removed by Yau's Chern number inequality and the hypothesis
``diffeomorphic" can be relaxed to ``homeomorphic" due to Novikov's
result that the rational Pontrjagin classes are indeed homeomorphism
invariants (\cite{No}). In summary, we have
\begin{theorem}[Hizebruch-Kodaira \cite{HK}, Yau \cite{yau}]\label{HKY}
If a K\"{a}hler manifold is homeomorphic to $\mathbb{C}P^n$, it must
be biholomorphic to $\mathbb{C}P^n$.
\end{theorem}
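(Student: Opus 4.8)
The plan is to follow the Hirzebruch--Kodaira argument, upgraded by Yau's two observations --- the homeomorphism invariance of the rational Pontrjagin classes and Yau's Chern number inequality --- so that only a homeomorphism is required.

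First I would extract the Kähler-topological consequences. From a homeomorphism $M\approx\mathbb{C}P^n$ one has $H^*(M;\mathbb{Z})\cong\mathbb{Z}[g]/(g^{n+1})$ with $\deg g=2$ and all odd Betti numbers zero; since $M$ is Kähler, $b_{2k}=\sum_{p+q=2k}h^{p,q}=1$ together with $h^{k,k}\ge 1$ (powers of a Kähler class) forces $h^{p,q}(M)=\delta_{pq}$ for $0\le p,q\le n$. In particular $H^1(M,\mathcal{O}_M)=H^2(M,\mathcal{O}_M)=0$, so $c_1\colon\mathrm{Pic}(M)\to H^2(M;\mathbb{Z})\cong\mathbb{Z}$ is an isomorphism; let $F$ be the line bundle with $c_1(F)=g$, where the generator $g$ is chosen to lie in the Kähler cone. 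Then $g$ is an integral Kähler class, so by Kodaira's theorem $M$ is projective and $F$ is ample; also $\chi(M,\mathcal{O}_M)=\sum_q(-1)^q h^{0,q}=1$, and $K_M=F^{\otimes(-\lambda)}$, where $c_1(M)=\lambda g$ with $\lambda\in\mathbb{Z}$.

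Next I would pin down $\lambda$, which I expect to be the main obstacle. By Novikov's theorem \cite{No} one has $p(M)=(1+g^2)^{n+1}$, exactly as for $\mathbb{C}P^n$. Set $P(m):=\chi(M,F^{\otimes m})=\int_M e^{mg}\,\mathrm{td}(M)$; by Hirzebruch--Riemann--Roch this is a polynomial of degree $n$ in $m$ with leading coefficient $g^n[M]/n!=\pm1/n!$ and with $P(0)=1$. Serre duality gives $P(m)=(-1)^n P(-\lambda-m)$, Kodaira vanishing gives $P(m)=h^0(M,F^{\otimes m})\ge 0$ whenever $F^{\otimes(m+\lambda)}$ is ample, and an anti-ample line bundle has no nonzero sections. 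Combining these with $\chi_{\mathrm{top}}(M)=c_n(M)[M]=n+1$ and analysing the zeros and signs of $P$ forces $c_1(M)=(n+1)g$ or $c_1(M)=-(n+1)g$, with $P(m)=\binom{m+n}{n}$ in the first case. This Riemann--Roch bookkeeping is the technical heart of \cite{HK}; the remaining steps are comparatively formal.

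If $c_1(M)=(n+1)g$ with $g$ ample, then $-K_M=(n+1)c_1(F)$, so $M$ is a Fano $n$-fold of index $n+1$, and by the Kobayashi--Ochiai theorem (or, following \cite{HK}, by checking $h^0(M,F)=P(1)=n+1$ and that $|F|$ realizes a biholomorphism onto $\mathbb{P}^n$) $M$ is biholomorphic to $\mathbb{C}P^n$. It remains to exclude $c_1(M)=-(n+1)g$: there $K_M=(n+1)g$ is ample, so $M$ is projective of general type and $g^n[M]>0$, hence $g^n[M]=1$; from $p_1(M)=c_1^2-2c_2=(n+1)g^2$ one gets $c_2(M)=\tfrac{n(n+1)}{2}g^2$, and then $K_M^n[M]=(n+1)^n=\tfrac{2(n+1)}{n}\,c_2(M)\,K_M^{n-2}[M]$, so equality holds in Yau's Chern number inequality $K_M^n[M]\le\tfrac{2(n+1)}{n}\,c_2(M)\,K_M^{n-2}[M]$ \cite{yau}. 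Hence the universal cover of $M$ is the unit ball $B^n$ and $\pi_1(M)$ is infinite, contradicting $\pi_1(M)=\pi_1(\mathbb{C}P^n)=0$; so this case cannot occur, and the theorem follows. It is precisely this last input that removes the parity hypothesis and lets ``diffeomorphic'' be weakened to ``homeomorphic''.
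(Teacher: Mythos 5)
Your proposal is correct and follows essentially the same route as the paper: use the Hodge-number rigidity and Novikov's invariance of rational Pontrjagin classes to reduce, via Hirzebruch--Riemann--Roch with $p(M)=(1+g^2)^{n+1}$ and $\chi(M,\mathcal{O}_M)=1$, to $c_1(M)=\pm(n+1)g$, then conclude by Kobayashi--Ochiai in the positive case and exclude $c_1(M)=-(n+1)g$ by the equality case of Yau's Chern number inequality (ball quotient, infinite $\pi_1$) against simple connectedness. The only stylistic difference is that where you defer the ``Riemann--Roch bookkeeping'' to \cite{HK}, the paper executes that step directly as a short residue computation showing $\mathrm{td}(M)=1$ forces $\binom{j}{n}=1$ with $j=\frac{k+n-1}{2}$, hence $k=n+1$ or (for even $n$) $k=-(n+1)$.
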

 In order to deduce their main result in \cite{HK},
Hirzebruch and Kodaira showed a related result, \cite[p. 210,
Theorem 6]{HK}. Inspired by the idea of the arguments of
\cite[Theorem 6]{HK}, Kobayashi and Ochiai gave in \cite{KO} another
characterization of $\mathbb{C}P^n$ as well as a characterization of
hyperquadrics in terms of the Fano index of a Fano manifold. Recall
that a compact complex manifold $M$ is called \emph{Fano} if its
first Chern class $c_1(M)$ is positive. By Kodaira's embedding
theorem a Fano manifold is projective and thus automatically
K\"{a}hler. The \emph{Fano index} of a Fano manifold $M$ is defined
to be the largest positive integer $I$ such that $c_1(M)/I\in
H^2(M;\mathbb{Z})$. We denote by $I(M)$ the Fano index of $M$.
Kobayashi and Ochiai showed in \cite{KO} that if the Fano index of
an $n$-dimensional Fano manifold is no less than $n+1$, it must be
biholomorphic to $\mathbb{C}P^n$. A later result of Michelsohn (cf.
\cite[p. 366]{LM} or \cite[p. 1143]{Mi}) indeed showed that the Fano
index of a Fano manifold can not be larger than $n+1$. We summarize
them into the following
\begin{theorem}[Kobayashi-Ochiai \cite{KO}, Michelsohn \cite{Mi}]\label{KOM}
 Suppose $M$ is a Fano manifold. Then $I(M)\leq n+1$, with equality if and only if
$M$ is biholomorphic to $\mathbb{C}P^n$.
\end{theorem}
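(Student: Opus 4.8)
The plan is to prove both assertions simultaneously by induction on $n$, using Kodaira vanishing together with Hirzebruch--Riemann--Roch for the numerical bound, and a Bertini-type argument on hyperplane sections for the equality case. Write $K_M$ for a canonical divisor of $M$, set $I:=I(M)$, and let $L$ be the line bundle with $c_1(L)=c_1(M)/I$, so that $-K_M=IL$; since $L^{\otimes I}=-K_M$ is ample, $L$ is ample.

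\textbf{The bound $I(M)\le n+1$.} First I would record, via Kodaira vanishing applied to $\mathcal{O}_M(jL)=K_M\otimes\mathcal{O}_M\bigl((j+I)L\bigr)$, that $H^q(M,\mathcal{O}_M(jL))=0$ for all $q\ge 1$ and all integers $j\ge 1-I$ (the twist $(j+I)L$ being ample precisely when $j+I\ge 1$); taking $j=0$ recovers $H^q(M,\mathcal{O}_M)=0$ for $q\ge 1$ since $M$ is Fano. On the other hand $H^0(M,\mathcal{O}_M(jL))=0$ for every $j\le -1$, because $jL$ is then anti-ample. Hence the Hilbert polynomial $P(j):=\chi(M,\mathcal{O}_M(jL))$, which has degree $n$ with leading coefficient $L^n/n!>0$, vanishes at the $I-1$ distinct integers $-1,-2,\dots,-(I-1)$ while $P(0)=\chi(M,\mathcal{O}_M)=1$; since a nonzero polynomial of degree $n$ has at most $n$ roots, this forces $I-1\le n$.

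\textbf{The equality case.} That $I(\mathbb{C}P^n)=n+1$ is clear, so suppose $I=n+1$. Then $P$ is forced to equal $\binom{j+n}{n}=\frac{1}{n!}(j+1)(j+2)\cdots(j+n)$, so $h^0(M,L)=P(1)=n+1$ and $L^n=1$. Now I would induct on $n$. For $n=1$, from $2-2g(M)=\deg(-K_M)=2\deg L\ge 2$ we get $g(M)=0$ and $M\cong\mathbb{C}P^1$. For $n\ge 2$, a general member $D\in|L|$ is smooth, and adjunction gives $K_D=(K_M+D)|_D=-nL|_D$, so $D$ is an $(n-1)$-dimensional Fano manifold of index at least $n=\dim D+1$, hence of index exactly $n$ by the bound just proved; by the inductive hypothesis $D\cong\mathbb{C}P^{n-1}$ with $L|_D\cong\mathcal{O}_{\mathbb{C}P^{n-1}}(1)$. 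Feeding this into $0\to\mathcal{O}_M\to\mathcal{O}_M(L)\to\mathcal{O}_D(L|_D)\to 0$ and using $H^1(M,\mathcal{O}_M)=0$ shows that $H^0(M,L)$ surjects onto $H^0(D,\mathcal{O}_{\mathbb{C}P^{n-1}}(1))$, so $|L|$ has no base point on $D$, and hence (letting $D$ vary) no base point at all. The complete linear system $|L|$ then defines a morphism $\varphi\colon M\to\mathbb{C}P^n$ with $\varphi^*\mathcal{O}(1)=L$; since $L$ is ample $\varphi$ is finite onto its image, and since $L^n=1\ne 0$ that image must be all of $\mathbb{C}P^n$ with $\varphi$ of degree one, so $\varphi$ is a finite birational morphism onto the normal variety $\mathbb{C}P^n$, hence an isomorphism.

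\textbf{Expected main obstacle.} The genuinely delicate point is the claim that a general $D\in|L|$ is smooth, equivalently that $\mathrm{Bs}\,|L|=\emptyset$: Bertini only controls the singularities of a general member away from the base locus, yet the restriction argument above already presupposes a smooth $D\cong\mathbb{C}P^{n-1}$ in order to conclude base-point freeness. Breaking this circularity --- ruling out base points of $|L|$ a priori, for instance by a direct analysis of the general member of $|L|$ or by first producing one good hyperplane section --- is precisely the technical core of Kobayashi and Ochiai's proof. I would also note that the bound $I(M)\le n+1$ alone admits a much shorter proof via Mori's bend-and-break: a rational curve $C\subset M$ with $0<-K_M\cdot C\le n+1$ gives $I\le I\,(L\cdot C)=-K_M\cdot C\le n+1$, since $L\cdot C$ is a positive integer.
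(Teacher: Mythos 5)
The paper does not prove Theorem \ref{KOM} at all: it is quoted from Kobayashi--Ochiai and Michelsohn with references, so your proposal has to be measured against the classical arguments rather than against anything in this article. Your first half, the bound $I(M)\leq n+1$, is complete and correct: Kodaira vanishing gives $\chi(M,\mathcal{O}_M(jL))=h^0(M,\mathcal{O}_M(jL))$ for $j\geq 1-I$, sections of the anti-ample bundles $jL$ ($j\leq -1$) vanish, so the degree-$n$ Hilbert polynomial has the $I-1$ roots $-1,\dots,-(I-1)$ while taking the value $1$ at $j=0$, forcing $I-1\leq n$. This is a standard and valid route (Michelsohn's own proof, via Clifford/spinor cohomology of K\"ahler manifolds, is different, and your bend-and-break remark is a legitimate third proof of the bound), and the consequences you extract in the equality case, $P(j)=\binom{j+n}{n}$, $L^n=1$, $h^0(M,L)=n+1$, are also correct.

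In the equality case, however, there is a genuine gap, and you have located it yourself: the existence of a smooth member $D\in|L|$ (equivalently, control of $\mathrm{Bs}\,|L|$) is nowhere established, and your restriction argument uses the smooth $D\cong\mathbb{C}P^{n-1}$ to deduce base-point freeness of $|L|$, which is circular, exactly as you admit. This is not a removable technicality: with $h^0(M,L)=n+1$ and $L^n=1$ the linear system could a priori have base points or only singular members, and producing a smooth ``ladder'' of successive hyperplane sections is precisely the technical core of Kobayashi--Ochiai's proof (and of Fujita's later $\Delta$-genus treatment). Everything downstream of the smooth-$D$ assumption is fine --- adjunction giving $I(D)\geq n$, hence $I(D)=n$ by the bound, the induction, surjectivity of $H^0(M,L)\to H^0(D,L|_D)$ from $H^1(M,\mathcal{O}_M)=0$, and the conclusion that $\varphi_{|L|}$ is a finite birational morphism onto $\mathbb{C}P^n$ and hence an isomorphism --- so what you have written is a correct reduction of the equality case to the unproved claim that a general member of $|L|$ is smooth, not a proof of the theorem.
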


\begin{remark}A recent exposition paper \cite{To} by Tosatti presents a
detailed proof of Theorem \ref{HKY} and some results in \cite{KO} as
well as some necessary background knowledge. Moreover, he gave a
detailed proof of the fact that the nonexistence of exotic complex
structures on $\mathbb{C}P^3$ implies the nonexistence of complex
structures on $S^6$ (\cite[Prop. 3.1]{To}), which was originally
observed by Hirzebruch (\cite[p. 223]{Hi1}).
\end{remark}

The next natural question is whether we are able to relax the
hypotheses ``K\"{a}hlerness" and ``homeomorphism" in Theorem
\ref{HKY} to guarantee that its conclusion remains true. For general
$n$, we have no essentially stronger results up to now, at least to
the author's best knowledge. But when $n$ are small enough, we
indeed have some stronger results. For $n=2$, still applying his
Chern number inequality, together with some well-known facts on
compact complex surfaces, Yau showed that (\cite{yau}) a compact
complex surface homotopy equivalent to $\mathbb{C}P^2$ is
biholomorphic to $\mathbb{C}P^2$, which also solved an old
conjecture in algebraic geometry posed by Severi. For $n=3$, Lanteri
and Struppa showed that (\cite{LS}) a compact K\"{a}hler threefold
having the same integral cohomology ring as $\mathbb{C}P^3$ is
biholomorphic to $\mathbb{C}P^3$, in whose proof Yau's Chern number
inequality is still a major ingredient. For $n=4$ or $5$, by
applying Theorem \ref{KOM}, Fujita showed that a Fano manifold
having the same integral cohmology ring as $\mathbb{C}P^4$ or
$\mathbb{C}P^5$ is biholomorphic to $\mathbb{C}P^4$ or
$\mathbb{C}P^5$. By applying Theorem \ref{KOM} and a formula
relating the Chern number $c_1c_{n-1}$ to Hodge numbers discovered
by themselves in \cite{LW}, Libgober and Wood showed that if a
compact K\"{a}hler manifold is homotopically equivalent to
$\mathbb{C}P^n$ for $n=4$, $5$ or $6$, then it is biholomorphic to
$\mathbb{C}P^n$. We collect the above-mentioned results into the
following

\begin{theorem}\label{smallvalue}
We denote by $(S)$ the following statement:
$$(S):=\text{A compact complex manifold $M$ is biholomorphic to $\mathbb{C}P^n$}.$$
Then
\begin{enumerate}
\item
\text{(Yau,\cite{yau})} When $n=2$, $(S)$ holds if we assume that
$M$ is homotopically equivalent to $\mathbb{C}P^n$;

\item
\text{(Lanteri-Struppa,\cite{LS})} When $n=3$, $(S)$ holds if we
assume that $M$ is K\"{a}hler and has the same integral cohomology
ring as $\mathbb{C}P^n$;

\item
\text{(Fujita,\cite{Fu})} When $n=4$ or $5$, $(S)$ holds if we
assume that $M$ is Fano and has the same integral cohomology ring as
$\mathbb{C}P^n$;

\item
\text{(Libgober-Wood,\cite{LW})} When $n=4$, $5$ or $6$, $(S)$ holds
if we assume that $M$ is K\"{a}hler and homotopically equivalent to
$\mathbb{C}P^n$.
\end{enumerate}
\end{theorem}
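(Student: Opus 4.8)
The plan is to deduce the four items from the works cited in their statements, so what I would actually write is the common strategy behind them; the argument splits according to $n$ and to which hypothesis is available.

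Items (3) and (4) --- $n=4,5$ in the Fano case and $n=4,5,6$ in the homotopy-$\mathbb{C}P^n$ case --- share the same endgame: reduce to Theorem \ref{KOM}. Write $g$ for the positive generator of $H^2(M;\mathbb{Z})$ and $c_1(M)=d\,g$. Once $M$ is known to be Fano, the Michelsohn half of Theorem \ref{KOM} gives $d\leq n+1$, with equality forcing $M\cong\mathbb{C}P^n$, so the entire task is to prove $d=n+1$. In item (3), where $M$ is already Fano ($d>0$), Fujita does this with Theorem \ref{KOM} itself: he shows the Fano index of $M$ equals $n+1$, the cohomology-ring constraint (in particular $g^n[M]=1$, which rules out the hyperquadric and the other competing Fano $n$-folds of smaller index occurring for $n=4,5$) together with Riemann--Roch and Kodaira vanishing on the (pluri-)anticanonical systems excluding every value $1\leq d\leq n$. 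In item (4) the Fano hypothesis is gone, so one must first show that $M$ is projective (K\"{a}hler with $H^2\cong\mathbb{Z}$ generated by a class that becomes of type $(1,1)$ and positive under the Hodge decomposition) and in fact Fano; the new ingredient is the Libgober--Wood identity of \cite{LW}, expressing the Chern number $c_1(M)c_{n-1}(M)[M]$ in terms of the Hodge numbers $h^{p,q}(M)$. As $M$ is homotopy equivalent to $\mathbb{C}P^n$ all $h^{p,q}$ are known, which pins $c_1c_{n-1}$ down; comparing with the Hirzebruch--Riemann--Roch expression for the same number as a polynomial in $d$ produces an equation whose only admissible root, for $n=4,5,6$, is $d=n+1$ --- after ruling out $d\leq 0$, e.g. via Yau's inequality.

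Items (1) and (2) instead run through surface and threefold classification. For $n=2$, a compact complex surface homotopy equivalent to $\mathbb{C}P^2$ is simply connected with $b_2=1$, so $c_1^2=9$ and $c_2=\chi_{\mathrm{top}}=3$; if $M$ were of general type, Yau's inequality $c_1^2\leq 3c_2$ would be an equality, and the equality case of Yau's theorem would exhibit $M$ as a ball quotient, contradicting simple connectedness. Hence $M$ is not of general type, and the Enriques--Kodaira list under $b_1=0$, $b_2=1$, $c_1^2=9$ leaves only $\mathbb{C}P^2$. For $n=3$, the cohomology ring of $\mathbb{C}P^3$ determines all Chern numbers of the K\"{a}hler threefold $M$ up to the single unknown $d$; Riemann--Roch, the symmetry of the $\chi_y$-genus and Yau's inequality pin $d$ down, force $-K_M$ ample, and identify $M$ with $\mathbb{C}P^3$.

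The genuinely delicate case is item (4) at $n=6$: in complex dimension $6$ the cohomology ring no longer determines the Chern (equivalently Pontrjagin) numbers, so the Hirzebruch--Riemann--Roch constraints are underdetermined, and one must combine the Libgober--Wood identity with every available positivity input --- Yau's inequality, Kodaira vanishing, integrality of the relevant Euler characteristics --- to kill the remaining parasitic integer solutions for $d$. That arithmetic is the heart of \cite{LW}, and it is also why the method does not obviously extend beyond $n=6$.
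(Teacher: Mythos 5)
This theorem is stated in the paper as a compendium of results quoted from the literature (Yau, Lanteri--Struppa, Fujita, Libgober--Wood); the paper offers no proof of it beyond the citations, so there is no internal argument to compare against line by line. Your reconstruction is a fair account of what the cited papers do, and it matches the machinery the paper itself uses later: reduce to the Kobayashi--Ochiai/Michelsohn criterion (Theorem \ref{KOM}) by pinning down $c_1(M)=d\,g$, using the Hodge numbers forced by the cohomology ring (hence $\mathrm{td}(M)=1$ and, in the Libgober--Wood cases, the value of $c_1c_{n-1}$), and eliminating the negative or parasitic values of $d$ with Yau's inequality and its equality case --- exactly the scheme the paper recapitulates for $n=4$ in the proof of Theorem \ref{value4}. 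Two small cautions if you were to write this out in full: in item (1) a compact complex surface homotopy equivalent to $\mathbb{C}P^2$ is not a priori K\"ahler, so before invoking Yau's inequality one needs the surface-classification input that Yau actually uses (this is where the argument is genuinely surface-theoretic rather than a formal consequence of the inequality); and in items (3)--(4) your phrase ``the cohomology ring determines the Chern numbers up to $d$'' overstates the situation even for $n=4,5$ --- what is determined is the Euler number, the Todd genus, and (via the Libgober--Wood identity, or via Riemann--Roch manipulations in Fujita's setting) the single Chern number $c_1c_{n-1}$, and the arithmetic then runs on those few constraints, which is precisely why the method becomes delicate at $n=6$ and stops there.
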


\begin{remark}\label{remark}
Note that, when $n=4$ or $5$, the assumptions in $(3)$ and $(4)$ of
Theorem \ref{smallvalue} can not imply each other and thus their
results are independent. Also note that the proof in \cite{Fu} is
sketchy and many details were omitted.
\end{remark}

\section{Main observations}
We shall present in this section our main observations of this
article, Theorems \ref{refined}, \ref{homotopytorus} and
\ref{value4} and postpone their proofs to the next section.

Our first observation is that, if we combine some of Hirzebruch and
Kodaira's original arguments in \cite{HK} and Kobayashi-Ochiai's
criterion in Theorem \ref{KOM}, the original hypothesis
``homeomorphism" in Theorem \ref{HKY} can be relaxed to yield the
following
\begin{theorem}\label{refined}
Suppose $M$ is a compact K\"{a}hler manifold having the same
integral cohomology ring and Pontrjagin classes as $\mathbb{C}P^n$.
Then
\begin{enumerate}
\item
$M$ is biholomorphic to $\mathbb{C}P^n$ provided that $n$ is
odd;

\item
$M$ is biholomorphic to $\mathbb{C}P^n$ provided that $n$ is even
and $M$ is simply-connected.
\end{enumerate}
\end{theorem}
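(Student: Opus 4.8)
The plan is to combine a Hodge-theoretic normalisation, the Hirzebruch--Kodaira computation of $c_1(M)$ via Riemann--Roch, the Kobayashi--Ochiai--Michelsohn criterion (Theorem~\ref{KOM}), and --- when $n$ is even --- Yau's Chern number inequality. First I would unpack the hypotheses. From $H^*(M;\mathbb{Z})\cong H^*(\mathbb{C}P^n;\mathbb{Z})$ we have $b_{2i}=1$, $b_{2i+1}=0$, and $H^2(M;\mathbb{Z})=\mathbb{Z}g$ for a generator $g$ with $g^n$ generating $H^{2n}(M;\mathbb{Z})$. Since $M$ is K\"ahler and $b_2=1$, Hodge symmetry forces $h^{2,0}(M)=0$ and $h^{1,1}(M)=1$, so $H^2(M;\mathbb{C})$ is of type $(1,1)$; then $0\ne g^i\in H^{i,i}(M)$ for $1\le i\le n$, and since $\sum_{p+q=2i}h^{p,q}=1$ with nonnegative summands we get $h^{i,i}=1$ and all other Hodge numbers in degree $2i$ vanishing, while the odd ones vanish because the odd Betti numbers do. Thus $M$ has the Hodge diamond of $\mathbb{C}P^n$; in particular $\chi(M,\mathcal{O}_M)=1$, $\chi(M)=n+1$, and $\chi^p(M):=\sum_q(-1)^qh^{p,q}(M)=(-1)^p$ for every $p$. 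Moreover $H^2(M;\mathbb{R})=H^{1,1}(M;\mathbb{R})=\mathbb{R}g$, so the K\"ahler cone is one of the two open half-lines; taking $g$ to be the generator lying in it, $g$ is ample, $M$ is projective by Kodaira's embedding theorem, $\mathrm{Pic}(M)=\mathbb{Z}g$, $\int_Mg^n=1$, and I write $c_1(M)=kg$ with $k\in\mathbb{Z}$.

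The second and main step is to show $k=\pm(n+1)$, with the sign being $+$ when $n$ is odd. Writing $c_i(M)=a_ig^i$, the hypothesis $p_i(M)=\binom{n+1}{i}g^{2i}$ together with the universal identities $p_i=c_i^2-2c_{i-1}c_{i+1}+\cdots$ expresses every even-index $a_{2i}$ as an explicit polynomial in $k$ and the odd-index $a_3,a_5,\dots$; in particular $c_2(M)=\tfrac{1}{2}(k^2-n-1)g^2$. Feeding the resulting Chern numbers into Hirzebruch--Riemann--Roch for the genera $\chi^p(M)=(-1)^p$ --- equivalently, matching $\chi_y(M)=\bigl(1-(-y)^{n+1}\bigr)/(1+y)$ --- one is left, after eliminating the free $a_i$'s, with a single polynomial equation in $k$; arguing as Hirzebruch and Kodaira do in \cite{HK}, its only integer roots are $n+1$ and, for even $n$, also $-(n+1)$. (When $n=3$, for instance, $c_2(M)=\tfrac{1}{2}(k^2-4)g^2$ and $\chi(M,\mathcal{O}_M)=\tfrac{1}{24}c_1c_2[M]=\tfrac{k(k^2-4)}{48}=1$, that is $(k-4)(k^2+4k+12)=0$, forcing $k=4=n+1$.) I expect this to be the \emph{main obstacle}: one must verify that the Riemann--Roch system in the unknowns $a_1,a_3,a_5,\dots$ has no spurious integer solution, and no solution with $a_1<0$ when $n$ is odd --- this is exactly the content of the computation in \cite{HK}, which goes through here because K\"ahlerness and the integral cohomology ring already recover the topological data it uses: the Hodge numbers above and, via $\mathrm{Sq}^2$ acting on $H^2$, the Stiefel--Whitney classes of $M$. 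Note also that once $k=\pm(n+1)$ one has $c_2(M)=\binom{n+1}{2}g^2$.

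Granting this, the conclusion is short. If $c_1(M)=(n+1)g$ with $g$ ample, then $M$ is a Fano manifold and, since $g$ generates $H^2(M;\mathbb{Z})$, its Fano index is $I(M)=n+1$; Theorem~\ref{KOM} then gives that $M$ is biholomorphic to $\mathbb{C}P^n$. By the previous step this already settles case~(1), $n$ odd. For case~(2), $n$ even and $M$ simply-connected, it remains to exclude $c_1(M)=-(n+1)g$. In that case $c_1(K_M)=(n+1)g$ is ample, so by the Aubin--Yau theorem $M$ carries a K\"ahler--Einstein metric of negative Ricci curvature and Yau's Chern number inequality applies; the values $c_1(M)=-(n+1)g$ and $c_2(M)=\binom{n+1}{2}g^2$ are exactly the extremal ones, since $\tfrac{2(n+1)}{n}c_1^{n-2}c_2[M]-c_1^n[M]=0$, so the inequality becomes an equality, and hence, by Yau, the universal cover of $M$ is biholomorphic to the unit ball $\mathbb{B}^n\subset\mathbb{C}^n$; thus $M=\mathbb{B}^n/\Gamma$ with $\Gamma$ acting freely. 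As $\mathbb{B}^n$ is noncompact while $M$ is compact, $\Gamma=\pi_1(M)$ must be infinite, contradicting $\pi_1(M)=1$. Therefore $c_1(M)=(n+1)g$ in both cases, and we conclude as above. (This last part is just the argument of Yau behind Theorem~\ref{HKY}; the only real difference is that relaxing ``homeomorphic to $\mathbb{C}P^n$'' to ``same integral cohomology ring and Pontrjagin classes'' gives up the simple-connectedness that was automatic there, which is why it must be put back in by hand when $n$ is even.)
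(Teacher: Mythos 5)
Your skeleton is the same as the paper's: recover the Hodge numbers of $\mathbb{C}P^n$ from K\"ahlerness plus the Betti numbers, run a Riemann--Roch computation using the Pontrjagin hypothesis to force $c_1(M)=(n+1)g$ for odd $n$ and $c_1(M)=\pm(n+1)g$ for even $n$, apply Theorem \ref{KOM} in the positive case, and exclude $c_1(M)=-(n+1)g$ for even $n$ via the equality case of Yau's inequality (with $c_2(M)=\binom{n+1}{2}g^2$ from $p_1=c_1^2-2c_2$) together with simple-connectedness. Your first, third and fourth steps are correct and essentially identical to the paper's.

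The gap is in the second, decisive step, which you yourself leave as an unverified ``main obstacle'': you propose to solve $p_i=c_i^2-2c_{i-1}c_{i+1}+\cdots$ for the even-index coefficients in terms of $k$ and $a_3,a_5,\dots$, and then to eliminate the odd unknowns from the constraints $\chi^p(M)=(-1)^p$. You never carry out this elimination, it is not evident that it collapses to ``a single polynomial equation in $k$'', and it is not what Hirzebruch--Kodaira actually do, so the deferral to \cite{HK} does not close the step as you describe it. The idea you are missing is the factorization of the Todd series as $e^{x/2}\cdot\frac{x}{e^{x/2}-e^{-x/2}}$, i.e.\ $\mathrm{td}(M)=e^{c_1(M)/2}\hat{A}(M)$ with $\hat{A}$ a universal series in the Pontrjagin classes alone. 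Since $p(M)=(1+g^2)^{n+1}$, writing $c_1(M)=kg$ gives $1=\mathrm{td}(M)=\int_M e^{kg/2}\bigl(\frac{g}{e^{g/2}-e^{-g/2}}\bigr)^{n+1}$, in which no Chern class other than $c_1$ occurs, so there are no unknowns $a_3,a_5,\dots$ to eliminate at all. A short residue computation (substitute $y=e^g-1$) reduces this to $j(j-1)\cdots(j-n+1)=n!$ with $j=\frac{k+n-1}{2}$; since the left-hand side minus $n!$ is a monic integer polynomial in $j$, any rational root is an integer, and a product of $n$ consecutive integers equals $n!$ only for $j=n$, or $j=-1$ when $n$ is even, which is exactly the dichotomy $k=n+1$, resp.\ $k=\pm(n+1)$. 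This one identity is the entire content of the step you postpone; without it, or an actual verification of your $\chi_y$-elimination scheme, the proof is incomplete. (Your remark that $w_2$ is recovered from the cohomology ring via $\mathrm{Sq}^2$ and Wu's formula is a correct parity check on $k$ but becomes unnecessary once the factorization is used, and your $n=3$ sample computation, while correct, does not indicate how the higher Chern numbers entering the Todd genus are to be handled for general $n$.)
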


\begin{remark}~
\begin{enumerate}
\item
Since $H^{\ast}(\mathbb{C}P^n;\mathbb{Z})$ has no torsion, rational
Pontrjagin classes coincide with integral Pontrjagin classes and
thus our hypotheses in Theorem \ref{refined} is strictly weaker than
those in Theorem \ref{HKY}.

\item
It must be known to some experts that the original hypothesis
``homeomorphism" in Theorem \ref{HKY} can be relaxed to some extent.
However, to the author's best knowledge, there is no literature
where this assumption was explicitly refined in the form as in our
Theorem \ref{refined}.
\end{enumerate}
\end{remark}

In view of Theorem \ref{KOM}, in order to complete the proof of
Theorem \ref{refined}, it suffices to show that $c_1(M)=(n+1)g$ with
$g$ being a positive generator of $H^2(M;\mathbb{Z})$. We shall see
in the next section in this process the invariance of Pontrjagin
classes play a key role. However, as we have mentioned in Theorem
\ref{smallvalue}, when $n\leq6$, only assuming homotopy equivalence
and without requirement on Pontrjagin classes, Libgober and Wood can
still be able to show that $c_1(M)=(n+1)g$ by applying some subtle
invariants of homotopy equivalence in algebraic topology. But their
methods are ad hoc and fail to treat the general $n$. Our second
observation is that, if we allow the manifold $M$ to have mild
symmetry, the same result still holds for general $n$.

A smooth closed $2n$-dimensional manifold is called an
\emph{$n$-dimensional homotopy complex projective space} if it is
homotopically equivalent to $\mathbb{C}P^n$. Recall that a classical
conjecture in transformation group theory, which was posed by Petrie
in \cite{Pe1} and is still open in its full generality, asserts that
if an $n$-dimensional homotopy complex projective space $M$ admits
an (effective and smooth) circle action, then its total Pontrjagin
class agrees with that of $\mathbb{C}P^n$, i.e.,
$p(M)=(1+g^2)^{n+1}$ for a generator $g$ of $H^2(M;\mathbb{Z})$.
Petrie himself verified this conjecture (\cite{Pe2}) under the
stronger hypothesis that if an $n$-dimensional torus acts
(effectively and smoothly) on $M$. Dessai and Wilking improved on
Petrie's result by showing that the conjecture holds if  a torus
whose dimension is larger than $\frac{n+1}{4}$ acts on $M$ (\cite[p.
506]{DW}). Now combining Theorem \ref{refined} with Dessai-Wilking's
this result, our second observations reads
\begin{theorem}\label{homotopytorus}
If a compact K\"{a}hler manifold is homotopically equivalent to
$\mathbb{C}P^n$ and acted on effectively and smoothly by a torus
whose dimension is larger than $\frac{n+1}{4}$, then it must be
biholomorphic to $\mathbb{C}P^n$. When $n\leq 6$, the latter
hypothesis can be dropped by various results in Theorem
\ref{smallvalue}.
\end{theorem}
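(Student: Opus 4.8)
The plan is to assemble the conclusion from the three ingredients already available: Theorem \ref{refined}, the theorem of Dessai and Wilking quoted above, and Theorem \ref{smallvalue}. First I would record the purely topological consequences of the hypothesis. If $M$ is a compact K\"{a}hler manifold homotopically equivalent to $\mathbb{C}P^n$, then in particular $H^{\ast}(M;\mathbb{Z})\cong H^{\ast}(\mathbb{C}P^n;\mathbb{Z})$ as graded rings, and $\pi_1(M)\cong\pi_1(\mathbb{C}P^n)=0$, so $M$ is automatically simply-connected; I would stress this last point because it is exactly what is needed to feed the even-dimensional case of Theorem \ref{refined}.

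Next I would invoke the Dessai--Wilking theorem. Since $M$ is an $n$-dimensional homotopy complex projective space carrying an effective smooth action of a torus of dimension larger than $\frac{n+1}{4}$, their result gives $p(M)=(1+g^2)^{n+1}$ for a generator $g$ of $H^2(M;\mathbb{Z})$, i.e.\ the total Pontrjagin class of $M$ coincides with that of $\mathbb{C}P^n$. At this stage $M$ meets every hypothesis of Theorem \ref{refined}: it is a compact K\"{a}hler manifold with the integral cohomology ring and the Pontrjagin classes of $\mathbb{C}P^n$, and when $n$ is even it is moreover simply-connected. Applying part $(1)$ of Theorem \ref{refined} when $n$ is odd and part $(2)$ when $n$ is even then yields that $M$ is biholomorphic to $\mathbb{C}P^n$.

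For the last sentence, where $n\le 6$ and the torus hypothesis is to be removed, I would split into cases and quote Theorem \ref{smallvalue} directly (the case $n=1$ being classical and trivial). For $n=2$, $M$ is a compact complex surface homotopically equivalent to $\mathbb{C}P^2$, so part $(1)$ of Theorem \ref{smallvalue} applies. For $n=3$, $M$ is a compact K\"{a}hler threefold with the integral cohomology ring of $\mathbb{C}P^3$, so part $(2)$ applies. For $n=4$, $5$ or $6$, $M$ is a compact K\"{a}hler manifold homotopically equivalent to $\mathbb{C}P^n$, so part $(4)$ applies. In every case the conclusion $(S)$ holds.

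I do not expect a genuine obstacle here: the statement is a synthesis, and all the analytic and topological heavy lifting has been done in Theorem \ref{refined} and in the cited work of Dessai and Wilking. The only point requiring a word of care is the verification that the even-dimensional case of Theorem \ref{refined} is applicable, which is why I would make explicit that homotopy equivalence to $\mathbb{C}P^n$ forces simple-connectivity; beyond that, the implication ``homotopy equivalent $\Rightarrow$ same integral cohomology ring'' is immediate and the small-$n$ claim is a direct citation.
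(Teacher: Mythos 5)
Your argument is exactly the paper's: homotopy equivalence to $\mathbb{C}P^n$ gives the integral cohomology ring and simple-connectivity, the Dessai--Wilking theorem supplies the standard total Pontrjagin class, and Theorem \ref{refined} (parts (1) and (2)) then yields the biholomorphism, with the $n\leq 6$ case quoted directly from Theorem \ref{smallvalue}. This matches the synthesis the author intends, so the proposal is correct and essentially identical in approach.
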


We now turn to our third observation in this article. We have
mentioned in Remark \ref{remark} that, when $n=4$, the hypotheses of
Fujita and Libgober-Wood can not imply each other and thus are
independent. Our third observation is to present a weaker hypothesis
than both of them. As is now well-known that a Fano manifold is
simply-connected, which is a corollary of the celebrated Calabi-Yau
theorem (cf. \cite[p. 225]{Zh}), the conditions of
simply-connectedness and having the same integral cohomology ring
are strictly weaker than the assumptions in $(3)$ and $(4)$ of
Theorem \ref{smallvalue}. Therefore our third observation, which
improves on the results of Fujita and Libgober-Wood when $n=4$,
asserts that the assumption on the invariance of Pontrjagin classes
in Theorem \ref{refined} can be dropped if $n=4$:
\begin{theorem}\label{value4}
A simply-connected compact K\"{a}hler manifold having the same
integral cohomology ring as $\mathbb{C}P^4$ is biholomorphic to
$\mathbb{C}P^4$.
\end{theorem}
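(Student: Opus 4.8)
The plan is to determine the Chern classes of $M$ completely out of Hodge theory and Hirzebruch--Riemann--Roch, and then to finish by Theorem \ref{KOM} together with Yau's Chern number inequality, in the same spirit as the proof of Theorem \ref{HKY}.

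First I would record the elementary consequences of the hypothesis. Since $M$ is compact K\"{a}hler with $H^{\ast}(M;\mathbb{Z})\cong\mathbb{Z}[g]/(g^5)$ (with $\deg g=2$), the Hodge decomposition, together with the fact that the powers $\omega,\omega^2,\omega^3$ of a K\"{a}hler class are nonzero in cohomology, forces $h^{p,q}(M)=\delta_{pq}$ for $0\le p,q\le 4$, exactly as for $\mathbb{C}P^4$; in particular $\chi(M,\mathcal{O}_M)=1$. Moreover $\sigma(M)=1$, since the cup pairing on $H^4(M;\mathbb{R})=\mathbb{R}g^2$ is positive, and $c_4(M)[M]=e(M)=5$. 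Writing $c_i(M)=a_i g^i$ with $a_i\in\mathbb{Z}$ and $\int_M g^4=1$, so that $a_4=5$, and normalizing $g$ so that the K\"{a}hler cone is $\mathbb{R}_{>0}g$, the whole problem reduces to showing $a_1=5$.

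Next I would exploit two Riemann--Roch identities. By Hirzebruch--Riemann--Roch one has $720\,\chi(\mathcal{O}_M)=(-c_1^4+4c_1^2c_2+3c_2^2+c_1c_3-c_4)[M]$, while the signature theorem together with $p_1=c_1^2-2c_2$ and $p_2=c_2^2-2c_1c_3+2c_4$ gives $45\,\sigma(M)=(-c_1^4+4c_1^2c_2+3c_2^2-14c_1c_3+14c_4)[M]$. Subtracting yields a Libgober--Wood--type identity $c_1c_3[M]=48\chi(\mathcal{O}_M)-3\sigma(M)+c_4[M]$, which here equals $50$; hence $a_1a_3=50$, and back-substitution gives $-a_1^4+4a_1^2a_2+3a_2^2=675$. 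In particular $a_1\mid 50$, and for each such $a_1$ the quadratic $3a_2^2+4a_1^2a_2-(a_1^4+675)=0$ must have an integer root, so its discriminant $28a_1^4+8100$ must be a perfect square; checking the divisors of $50$ this happens only when $a_1^2=25$, and then $a_2=10$ and $a_3=\pm10$, i.e. $c(M)=(1\pm g)^5$.

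Finally I would split on the sign of $a_1$. If $a_1=5$, then $c_1(M)=5g>0$, so $M$ is Fano and, since $H^2(M;\mathbb{Z})=\mathbb{Z}g$, its Fano index is $I(M)=5$, the maximal value allowed by Theorem \ref{KOM} for a $4$-dimensional Fano manifold, whence $M\cong\mathbb{C}P^4$. If $a_1=-5$, then $K_M=5g$ is ample, so $M$ carries a K\"{a}hler--Einstein metric of negative Ricci curvature and Yau's Chern number inequality \cite{yau} applies: $c_1(M)^4[M]\le\frac{5}{2}\,c_1(M)^2c_2(M)[M]$, i.e. $625\le\frac{5}{2}\cdot 250=625$. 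Equality holds, so $M$ must be a compact quotient of the unit ball in $\mathbb{C}^4$; but such a quotient is aspherical with infinite fundamental group, contradicting the simple-connectedness of $M$. Thus $a_1=-5$ is impossible, and the theorem follows. The routine part is the first two steps; the point I expect to be the crux is that those Riemann--Roch identities alone, \emph{with no input on the Pontrjagin classes}, already leave only two candidates for $c(M)$ --- this is what replaces the Pontrjagin hypothesis of Theorem \ref{refined} --- and that the remaining candidate with $K_M$ ample is eliminated by the equality case of Yau's inequality, exactly as Yau disposed of $c_1=-(n+1)g$ for even $n$ in Theorem \ref{HKY}. It is precisely this Diophantine rigidity, which does not obviously persist in higher dimensions, that confines the present improvement to $n=4$.
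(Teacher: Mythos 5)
Your proposal is correct and follows essentially the same route as the paper: compute $c_4=5$, $\mathrm{td}(M)=1$ and $c_1c_3=50$, use the resulting quadratic in $c_2$ to force $c_1=\pm 5g$ with $c_2=10g^2$, conclude via Kobayashi--Ochiai--Michelsohn in the positive case, and rule out $c_1=-5g$ by the equality case of Yau's inequality plus simple connectedness. The only cosmetic differences are that you rederive $c_1c_3=50$ from the Todd-genus and signature formulas instead of citing Libgober--Wood's Corollary 2.5, and you test all divisors of $50$ rather than first bounding $c_1\leq 5g$ by Theorem \ref{KOM}.
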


\section{Proofs of Theorems \ref{refined} and \ref{value4}}
\subsection{Proof of Theorem \ref{refined}}
We first show the following key lemma under the hypotheses in
Theorem \ref{refined} , which is \cite[p. 208, Lemma2]{HK}. Here the
idea of our proof was still adopted from \cite{HK} but is more
direct and compact. We shall also see from this process that the
technical assumption we need is only the invariance of the integral
cohomology ring and Pontrjagin classes.
\begin{lemma}
Suppose $M$ is a compact K\"{a}hler manifold and its integral
cohomology ring and Pontrjagin classes are the same as those of
$\mathbb{C}P^n$. Then $c_1(M)=(n+1)g$ (resp. $c_1(M)=\pm(n+1)g$)
provided $n$ is odd (resp. even). Here $g$ is the positive generator
of $H^2(M;\mathbb{Z})$.
\end{lemma}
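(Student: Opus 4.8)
The plan is to pin down the integer $k$ in $c_1(M)=kg$ (this makes sense since $H^2(M;\mathbb{Z})\cong\mathbb{Z}g$, and we normalise $g$ by $\int_Mg^n=1$), showing $k=n+1$ when $n$ is odd and $k=\pm(n+1)$ when $n$ is even. Two inputs feed into this. First, since $M$ is K\"ahler with the Betti numbers of $\mathbb{C}P^n$, the Hodge decomposition together with $b_{2i}(M)=1$ and $h^{i,i}(M)\ge 1$ (a power of the K\"ahler class is a nonzero element of $H^{i,i}(M)$) forces $h^{p,q}(M)=\delta_{pq}$; in particular $\chi(M,\mathcal{O}_M)=\sum_q(-1)^qh^{0,q}(M)=1$. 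This is the only place K\"ahlerness is used. Second, the hypothesis $p(M)=(1+g^2)^{n+1}$ says precisely that the Pontrjagin classes of $M$ and of $\mathbb{C}P^n$ agree.

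Next I would invoke Hirzebruch--Riemann--Roch: $1=\chi(M,\mathcal{O}_M)=\int_M\mathrm{Td}(M)$. Factoring the Todd class as $\mathrm{Td}(M)=\widehat{A}(TM)\cdot e^{c_1(M)/2}$, where $\widehat{A}$ is the multiplicative class built universally from the Pontrjagin classes, the equality $p(M)=p(\mathbb{C}P^n)$ gives $\widehat{A}(TM)=\widehat{A}(T\mathbb{C}P^n)$, i.e.\ $\widehat{A}(TM)$ is the truncation modulo $g^{n+1}$ of $\big(\tfrac{g/2}{\sinh(g/2)}\big)^{n+1}$. Writing $c_1(M)=kg$, this yields
\be
1=\int_M\widehat{A}(TM)\,e^{kg/2}=F(k/2),\qquad F(t):=\Big[g^n\Big]\Big(\tfrac{g/2}{\sinh(g/2)}\Big)^{n+1}e^{tg},
\ee
where $F(t)$ is a polynomial of degree $n$ in $t$ depending only on $n$.

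The remaining work is to evaluate $F$ and read off $k$. Substituting $g=2w$ and then $z=e^{2w}$ converts the coefficient extraction into $\mathrm{Res}_{z=1}\,z^{\,t+(n-1)/2}/(z-1)^{n+1}$, and a short computation gives $F(t)=\binom{t+(n-1)/2}{n}$. Hence $\binom{s}{n}=1$ with $s:=(k+n-1)/2\in\tfrac12\mathbb{Z}$. If $s\notin\mathbb{Z}$, then $s(s-1)\cdots(s-n+1)$ is a product of $n$ half-odd-integers, so $\binom{s}{n}$ is an odd integer divided by $2^{\,n}\,n!$ and cannot equal $1$; thus $s\in\mathbb{Z}$. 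For integral $s$ one has $\binom{s}{n}=0$ for $0\le s\le n-1$, $\binom{s}{n}\ge n+1$ for $s\ge n+1$, and $\binom{s}{n}=(-1)^n\binom{\,n-1-s\,}{n}$ for $s<0$, which has absolute value $\ge 1$ with equality only at $s=-1$, where its value is $(-1)^n$. Therefore $\binom{s}{n}=1$ forces $s=n$, giving $k=n+1$, or else $s=-1$ with $n$ even, giving $k=-(n+1)$. This is the assertion of the lemma; together with Theorem~\ref{KOM} it also disposes of the case $c_1(M)=(n+1)g$ in Theorem~\ref{refined}.

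The one routine ingredient is the residue computation identifying $F(t)=\binom{t+(n-1)/2}{n}$; the substantive points are that the only information beyond the cohomology ring is the value $\chi(M,\mathcal{O}_M)=1$ (K\"ahlerness) and the invariance of the Pontrjagin classes (which fixes $\widehat{A}(TM)$), after which the conclusion reduces to the elementary fact that among half-integers $s$ the equation $\binom{s}{n}=1$ has only the solutions $s=n$ and, for even $n$, $s=-1$. The small subtlety worth care is the $2$-adic argument ruling out half-integral $s$; this is exactly what produces the asymmetry between odd and even $n$. (A proof through the $\chi_y$-genus is also possible but is heavier; the $\mathrm{Td}=\widehat{A}\,e^{c_1/2}$ route is the most direct.)
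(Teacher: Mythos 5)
Your proof is correct and follows essentially the same route as the paper: the Hodge-number argument gives $\mathrm{td}(M)=1$, the factorization $\mathrm{Td}=\widehat{A}\,e^{c_1/2}$ exploits the Pontrjagin class hypothesis, and the residue/coefficient computation reduces everything to $\binom{(k+n-1)/2}{n}=1$, exactly as in the paper. Your only additions are cosmetic but welcome refinements — the explicit $2$-adic exclusion of half-integral $s$ and the case analysis of integer solutions — which the paper passes over when it jumps from $n!=j(j-1)\cdots(j-n+1)$ to $j=n$ or $j=-1$.
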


\begin{proof}
By assumptions we have
$$H^{\ast}(M;\mathbb{Z})=\mathbb{Z}[g]/(g^{n+1}), \qquad\int_Mg^n=1,$$
 and
the total Pontrjagin class of $M$ is given by $p(M)=(1+g^2)^{n+1}$.
We first note that the Hodge numbers of $M$ are the same as those of
$\mathbb{C}P^n$. Indeed, the famous relations of Hodge numbers for
compact K\"{a}hler manifolds tell us that
$$h^{p,p}(M)\geq 1,\qquad h^{p,q}(M)\geq0\qquad \text{for $0\leq p,q\leq n,$}$$
and
$$\sum_{p+q=i}h^{p,q}(M)=\text{the $i$-th Betti number of $M$}=
\text{the $i$-th Betti number of
$\mathbb{C}P^n$}=\frac{1+(-1)^i}{2},$$ which imply that
$$h^{p,p}(M)=1~(0\leq p\leq n),\qquad\text{and}\qquad h^{p,q}(M)=0~(p\neq q).$$
These lead to the value of the Todd genus of $M$, $\text{td}(M)$,
via
$$\text{td}(M)=\sum_{q=0}^n(-1)^qh^{0,q}(M)=1.$$
The Todd genus is a complex genus in the sense of Hirzebruch
(\cite[Ch.1,3]{Hi} or \cite[\S 1.8]{HBJ}) whose associated power
series is
$$\frac{x}{1-e^{-x}}=e^{\frac{x}{2}}\cdot\frac{x}
{e^{\frac{x}{2}}-e^{-\frac{x}{2}}}.$$ Note that $\frac{x}
{e^{\frac{x}{2}}-e^{-\frac{x}{2}}}$ is nothing but the even power
series whose associated genus is the $\hat{A}$-genus and can be
defined for oriented closed smooth manifolds in terms of Pontrjagin
classes (\cite[Ch.1]{Hi},\cite[\S 1.6]{HBJ}). We now suppose
$c_1(M)=kg$ for $k\in\mathbb{Z}$. In view of the fact that
$p(M)=(1+g^2)^{n+1}$, we have \be\begin{split}
 1=\text{td}(M)&=\int_Me^{\frac{kg}{2}}\cdot
 (\frac{g}{e^{\frac{g}{2}}-e^{-\frac{g}{2}}})^{n+1}\\
&=\int_Me^{\frac{(k+n+1)g}{2}}\cdot
 (\frac{g}{e^{g}-1})^{n+1}\\
 &=\text{the coefficient of $g^n$ in
 $e^{\frac{(k+n+1)g}{2}}\cdot
 \frac{g^{n+1}}{(e^{g}-1)^{n+1}}$}\\
 &=\text{the residue of $e^{\frac{(k+n+1)g}{2}}\cdot
\frac{1}{(e^{g}-1)^{n+1}}$ at $g=0$}\\
&=\frac{1}{2\pi\sqrt{-1}}\oint e^{\frac{(k+n+1)g}{2}}\cdot
\frac{1}{(e^{g}-1)^{n+1}}\text{d}g\\
&=\frac{1}{2\pi\sqrt{-1}}\oint
\frac{(y+1)^{\frac{k+n-1}{2}}}{y^{n+1}}\text{d}y\qquad
(e^g-1=:y)\\
&=\text{the coefficient of $y^n$ in $(y+1)^{\frac{k+n-1}{2}}$}\\
&=\frac{j(j-1)\cdots(j-n+1)}{n!}.\qquad (\frac{k+n-1}{2}=:j)
\end{split}\nonumber\ee
This yields $$n!=j(j-1)\cdots(j-n+1)$$ and thus $j=n$ (resp. $j=n$
or $-1$) provided $n$ is odd (resp. even). This implies $k=n+1$
(resp. $k=\pm(n+1)$) provided $n$ is odd (resp. even) and thus
completes the proof.
\end{proof}

In view of Theorem \ref{KOM}, in order to complete the proof of
Theorem \ref{refined}, it suffices to show that, when $n$ is even,
the additional hypothesis of simply-connectedness on $M$ can rule
out the possibility of $c_1(M)=-(n+1)g$. This can follows from
equality case of the following inequality due to Yau (\cite{yau}):

\begin{theorem}[Yau's Chern number inequality, negative case]
Suppose $M$ is an $n$-dimensional compact K\"{a}hler manifold with
$c_1(M)<0$. Then we have the following Chern number inequality
\be\label{chernnumber}\frac{2(n+1)}{n}(-c_1)^{n-2}
c_2\geq(-c_1)^n,\ee where the equality holds if and only if $M$ has
constant holomorphic sectional curvature, i.e., $M$ is
holomorphically covered by the unit ball in $\mathbb{C}^n$.
\end{theorem}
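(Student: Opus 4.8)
The plan is to convert the purely cohomological hypothesis $c_1(M)<0$ into the existence of a canonical metric, and then realize the difference between the two sides of \eqref{chernnumber} as the integral of a pointwise-nonnegative curvature expression. Since $c_1(M)<0$ means the canonical bundle $K_M$ is positive, the Aubin--Yau solution of the Calabi conjecture in the negative case provides a K\"{a}hler--Einstein metric $g$, unique up to scale, which I normalize so that its Ricci form satisfies $\mathrm{Ric}(\omega)=-\omega$. For this metric the Chern--Weil representative of $-c_1$ is exactly $\tfrac{1}{2\pi}\omega$, and since $(-c_1)^2=c_1^2$ the whole quantity in question rewrites as
$$\frac{2(n+1)}{n}(-c_1)^{n-2}c_2-(-c_1)^n=\frac{1}{(2\pi)^{n-2}}\int_M\left(\frac{2(n+1)}{n}c_2-c_1^2\right)\wedge\omega^{n-2}.$$
Everything is thereby reduced to showing that the $(2n)$-form $\bigl(\tfrac{2(n+1)}{n}c_2-c_1^2\bigr)\wedge\omega^{n-2}$ is pointwise $\geq0$ for the K\"{a}hler--Einstein metric.

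Second, I would carry out the Chern--Weil computation, expressing the Chern forms through the curvature tensor $R_{i\bar j k\bar l}$ of $g$. Writing $s$ for the scalar curvature and $\mathrm{Ric}$ for the Ricci tensor, the standard reduction formulas for $c_1^2\wedge\omega^{n-2}$ and $c_2\wedge\omega^{n-2}$ --- obtained by wedging the $(2,2)$-form curvature invariants with $\omega^{n-2}$ and contracting against the metric --- express both as $\tfrac{\omega^n}{n!}$ times universal quadratic polynomials in $s$, $|\mathrm{Ric}|^2$ and $|R|^2$. Assembling these, the integrand of $\bigl(\tfrac{2(n+1)}{n}c_2-c_1^2\bigr)\wedge\omega^{n-2}$ becomes a fixed quadratic form $Q(s,|\mathrm{Ric}|^2,|R|^2)$ times the volume form.

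Third, I would impose the Einstein normalization $\mathrm{Ric}=-\omega$, i.e. $R_{i\bar j}=-g_{i\bar j}$, so that $s\equiv-n$ and $|\mathrm{Ric}|^2\equiv n$ are constant. Substituting these relations into $Q$ kills the Ricci and scalar contributions, and then, using the orthogonal decomposition $|R|^2=|B|^2+(\text{Ricci and scalar terms})$, where $B_{i\bar j k\bar l}$ is the Bochner (trace-free) curvature tensor --- the K\"{a}hler analogue of the Weyl tensor --- the combination collapses to a strictly positive constant $\kappa_n$ times $|B|^2$. Hence the integrand equals $\kappa_n|B|^2\,\tfrac{\omega^n}{n!}\geq0$, which yields $\frac{2(n+1)}{n}(-c_1)^{n-2}c_2\geq(-c_1)^n$.

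Finally, for the equality case, vanishing of the integral forces $B\equiv0$ on $M$; since for $n\geq2$ the vanishing of the Bochner tensor on a K\"{a}hler--Einstein manifold is equivalent to $g$ having constant holomorphic sectional curvature, the metric is a complex space form, and as $\mathrm{Ric}<0$ this curvature is negative, so by the classification of complete complex space forms the universal cover is the unit ball $\mathbb{B}^n\subset\mathbb{C}^n$; conversely ball quotients are K\"{a}hler--Einstein with $B\equiv0$ and realize equality. I expect the main obstacle to be twofold: the deep existence input (the Aubin--Yau theorem, whose proof is the resolution of a complex Monge--Amp\`ere equation) and, at the algebraic level, the careful bookkeeping of constants in the reduction formulas, since getting the coefficient $\tfrac{2(n+1)}{n}$ to be precisely the value that annihilates the pure-scalar part of $Q$ and leaves exactly $\kappa_n|B|^2$ with $\kappa_n>0$ is the crux that makes the inequality sharp.
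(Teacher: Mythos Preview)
The paper does not prove this theorem at all: it is quoted verbatim as a known result of Yau (with reference [Yau77]) and then \emph{applied} in the ``Completion of proof of Theorem~\ref{refined}'' and in the proof of Theorem~\ref{value4}. There is therefore no ``paper's own proof'' to compare against.

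Your outline is nonetheless the standard, correct route to the inequality (essentially the Aubin--Yau/Chen--Ogiue argument). The logical architecture --- produce a K\"ahler--Einstein metric from $c_1<0$ via the Calabi conjecture, compute $\bigl(\tfrac{2(n+1)}{n}c_2-c_1^2\bigr)\wedge\omega^{n-2}$ by Chern--Weil, impose $\mathrm{Ric}=-g$ so that the scalar and Ricci contributions cancel and only a positive multiple of $|B|^2$ survives, and identify $B\equiv0$ with constant holomorphic sectional curvature --- is exactly right, as is the equality discussion via the classification of complex space forms. The only places a referee would press you are the ones you yourself flag: the explicit Chern--Weil identities giving $c_1^2\wedge\omega^{n-2}$ and $c_2\wedge\omega^{n-2}$ as universal quadratic polynomials in $s$, $|\mathrm{Ric}|^2$, $|R|^2$ times $\omega^n/n!$, and the verification that after substituting $s=-n$, $|\mathrm{Ric}|^2=n$ the combination reduces to $\kappa_n|B|^2$ with $\kappa_n>0$. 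These are genuine computations rather than formalities, but they are well documented in the literature and your plan for them is sound.
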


\emph{Completion of proof of Theorem \ref{refined}}.

Now suppose $n$ is even and $c_1(M)=-(n+1)g$. Thus means $c_1(M)<0$
and the inequality (\ref{chernnumber}) holds for $M$. The first
Pontrjagin class of $M$, $p_1(M)$, is equal to $(n+1)g^2$ as the
total Pontrjagin class $p(M)=(1+g^2)^{n+1}$. Recall that for a
complex manifold $M$ we have the basic fact
$p_1(M)=c_1^2(M)-2c_2(M)$. Then $c_2(M)=\frac{n(n+1)}{2}g^2$ and so
$M$ satisfies the equality case of (\ref{chernnumber}),
contradicting to the simply-connectedness of $M$.

\subsection{Proof of Theorem \ref{value4}}
We shall adopt the same strategy as in \cite[p. 147]{LW} to prove
Theorem \ref{value4}.

If $M$ has the same integral chomology ring as that of
$\mathbb{C}P^4$, then the Chern number $c_1c_3$ of $M$ equals to
$50$ by \cite[Corollary 2.5]{LW}. This, together with the fact that
$c_1(M)\leq 5g$ from Theorem \ref{KOM}, tells us that the possible
values of $c_1(M)$ are \be\label{possiblevalue} \pm
g,~\pm2g,~\pm5g,~-10g,~-25g,~-50g.\ee

We also know from last subsection that the Todd genus
$\text{td}(M)=1$. By the formula of Todd genus in terms of Chern
numbers for $n=4$ (\cite[p.14]{Hi}) we have
$$1=\text{td}(M)=\frac{1}{720}(-c_4+c_1c_3+3c_2^2+4c_1^2c_2-c_1^4).$$
Note that the top Chern number $c_4=5$ as it equals to the Euler
number of $M$. Combining this with $c_1c_3=50$ yields the following
relation \be\label{equation}3c_2^2+4c_1^2c_2+(-c_1^4-675)=0.\ee By
the abuse of notation we may view $c_1$ and $c_2$ as integers in
(\ref{equation}) via the identifications
$H^2(M;\mathbb{Z})=\mathbb{Z}g$ and
$H^4(M;\mathbb{Z})=\mathbb{Z}g^2$. Thus, if we view (\ref{equation})
as a quadric equation of $c_2$, we have \be\label{discriminant}
c_2=\frac{-4c_1^2\pm\sqrt{7c_1^4+2025}}{3}.\ee

It can be checked directly that the only values among
(\ref{possiblevalue}) which make $c_2$ in (\ref{discriminant})
integral are $c_1=\pm5g$. In this case $c_2=10g^2$. The possibility
$(c_1,c_2)=(-5g,10g^2)$ still satisfies the equality case in
(\ref{chernnumber}) and thus can be ruled out by the
simply-connected hypothesis as before. This means the only
possibility is $c_1(M)=5g$, which completes the proof.

\begin{remark}
The only difference between this proof and \cite{LW} is that, under
the assumption of homotopy equivalence made in \cite{LW}, the parity
of $c_1$ must be the same as that of $n+1$ (in the case of $n=4$
$c_1$ must be odd) as its modulo two reduction is exactly the second
Stiefel-Whitney class, which is an invariant under homotopy
equivalence due to the classical Wu formula. So the possible values
considered in \cite{LW} are smaller than ours in
(\ref{possiblevalue}). Fortunately, all the other values in
(\ref{possiblevalue}) make the discriminant $7c_1^4+2025$ in
(\ref{equation}) square-free and thus can still be ruled out. So the
strategy in \cite{LW} can be carried over to deal with Theorem
\ref{value4}.
\end{remark}
\section*{Acknowledgements}
Part of the work was done when I visited Taida Institute for
Mathematical Sciences at National Taiwan University in July and
August 2015. I would like to thank the institute and Professor
Chang-Shou Lin for their hospitality. I also would like to thank
Professor Valentino Tosatti for his interest and useful comments on
the content of this note, and Dr. Zhaohu Nie for some useful
discussions during our stay in Taiwan.

\end{document}